\newcommand{\e}{\mathrm{e}}
\newcommand{\lam}{\lambda}
\newcommand{\ENt}[1][t]{E[N(#1)]}
\newcommandx{\mnt}[2][1=n,2=t]{m_{#1}(#2)}
\newcommandx{\mpnr}[2][1=n,2=r]{m'_{#1}(#2)}
\newcommand{\emlx}[1][x] {\e^{-\lam #1}}
\newcommandx{\sumXk}[3][1=1,2=n,3=X]{\sum_{k=#1}^{#2}{#3}_k}
\mathchardef\mhyphen="2D
\newtheorem{theorem}{\noindent Theorem}
\newtheorem{lemma}{\noindent Lemma}
\newtheorem{corollary}{\noindent Corollary}
\begin{document}

\title{Replace-after-Fixed-or-Random-Time Extensions\\of the Poisson Process}
\date{}
\author{James E. Marengo*\\Joseph G. Voelkel*\\David L. Farnsworth*\\ Kimberlee S. M. Keithley**\\
		\\
		* Rochester Institute of Technology\\
		** University of California, Santa Barbara}

\maketitle

%

\begin{abstract}
We analyze extensions of the Poisson process in which any interarrival time that exceeds a fixed value $r$ is counted as an interarrival of duration $r$. In the engineering application that initiated this work, one part is tested at a time, and $N(t)$ is the number of parts that, by time $t$, have either failed, or if they have reached age $r$ while still functioning, have been replaced. We refer to $\{N(t), t \geq 0\}$ as a replace-after-fixed-time process. We extend this idea to the case where the replacement time for the process is itself random, and refer to the resulting doubly stochastic process as a replace-after-random-time process.
\end{abstract}

\noindent%
{\it Keywords:}  Doubly-stochastic process; replacement-after-fixed-time process; \\replacement-after-random-time process; expected value of a sum; renewal process.


\section{Introduction}\label{sec:Int}
In engineering tests, parts may be tested one at a time on a test stand; in some cases, parts may be removed after a fixed amount of time $r$ so that the properties of the part at that time may be studied, or out of concern that the part may fail shortly. Here, $N(t)$ is the number of parts in $(0,t]$ that have either failed or, if they were still working at age $r$, have been replaced. We study the behavior of such a process when each part is subject to a constant hazard rate $\lam>0$. For this reason, we call $\{N(t), t \geq 0\}$ replace-after-fixed-time (RaFT) extension of the Poisson process. We also extend this process to the case where the replacement time for the process is itself random, leading to the doubly-stochastic replace-after-random-time (RaRT) process. RaRT processes might arise if, for example, different service locations replace parts but each location uses a different replacement time.

Because it spans many disciplines and includes diverse applications and theory, the literature on renewal processes is vast. See \cite{Cha1} for minimal or perfect repairs or replacements. For more general models in which the environment is changing, situations are deteriorating because of aging machinery, for example, and replacement parts may not be as good as new, see \cite{Asfaw}, \cite{Brown2}, \cite{Cha3}, \cite{Chukova}, and \cite{Yeh}. Multiple modes of failure are discussed in \cite{Wang}. The detection of changes in the arrival distribution or rate, which has an impact on all of these issues, is investigated in \cite{Brown1}. Many models are compared in  \cite{Barlow} and \cite{Cha2}. General references for these topics are \cite{Birolini}, \cite{Cha2}, and \cite{SS}.

\section{Fixed Replacement Times: the RaFT Process}
Suppose $X_1, X_2, \ldots$ are independent random variables, each having the exponential distribution with failure rate $ \lambda > 0$ and probability density function given by
\[
f(x) =
\begin{cases}
\lambda e ^{-\lambda x} & x > 0 \\
0 & x \leq 0.
\end{cases}
\]

Fix $r > 0$ and let $Y_k =$ min$(X_k, r)$ for $k \geq 1$. For $t > 0$, define $N(t) = \max\{n \geq 0: \sum\limits_{k=1}^{n} Y_k \leq t\}$. Then $\{N(t), t \geq 0 \}$ is a renewal process with interarrival times $Y_1, Y_2,\ldots$. There are two types of renewals for this process, because $N(t) = A(t) + D(t)$, where $A(t)$ is the number of components that have been replaced by time $t$ while they are still functioning (alive) and $D(t)$ is the number of components that have been replaced by time $t$ because they have failed. \\

Our purpose is to compute the joint probability distribution of $(A(t), D(t))$. This is of interest in its own right, and from this joint distribution one can derive the distribution of $N(t)$. We start with a lemma.

\begin{lemma}  \label{lemma1_kk9287}
Suppose $S_n = \sum\limits_{k=1}^{n} X_k$, $j \in \{0, 1, 2, \ldots\}$ and $jr \leq t < (j+1)r$. Then
\[
Pr(S_n \leq t, \bigcap_{k=1}^n (X_k \leq r)) = e^{-\lambda t} \sum\limits_{i=0}^{j} (-1)^i {n \choose i} \sum\limits_{k=n}^{\infty} \frac{(\lambda (t-ir))^k}{k!}.
\]
\end{lemma}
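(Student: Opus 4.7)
The plan is to prove the formula by inclusion-exclusion on the complementary events $F_k = \{X_k > r\}$, combined with the memoryless property of the exponential distribution.

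First I would write
\[
\mathbf{1}\left(\bigcap_{k=1}^n \{X_k \le r\}\right) = \mathbf{1}\left(\overline{\bigcup_{k=1}^n F_k}\right) = \sum_{I \subseteq [n]} (-1)^{|I|} \mathbf{1}\left(\bigcap_{i \in I} F_i\right),
\]
take expectation against $\mathbf{1}(S_n \le t)$, and group terms by $|I|=i$. Using symmetry of the i.i.d.\ $X_k$, this gives
\[
\Pr\!\left(S_n \le t, \bigcap_{k=1}^n \{X_k \le r\}\right) = \sum_{i=0}^{n} (-1)^i \binom{n}{i} \, \Pr(S_n \le t,\, X_1 > r,\ldots,X_i > r).
\]

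Next I would evaluate the inner probability using the memoryless property. Conditioning on $X_1 > r,\ldots,X_i > r$ (an event of probability $e^{-i\lambda r}$), the shifted variables $X_1 - r,\ldots,X_i-r,X_{i+1},\ldots,X_n$ are i.i.d.\ Exp$(\lambda)$, so their sum is distributed like $S_n$. Hence the inner probability equals $e^{-i\lambda r}\Pr(S_n \le t - ir)$, which is $0$ when $t - ir < 0$. Since $jr \le t < (j+1)r$, this cut-off removes precisely the terms with $i \ge j+1$, leaving the sum $\sum_{i=0}^{j}$ that appears in the statement.

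Finally I would plug in the standard Gamma-cdf identity
\[
\Pr(S_n \le s) = e^{-\lambda s} \sum_{k=n}^{\infty} \frac{(\lambda s)^k}{k!} \qquad (s \ge 0),
\]
which follows from the Poisson-process representation (or by integrating the Gamma density against $e^{\lambda s} \cdot e^{-\lambda s}$ and splitting the series for $e^{\lambda s}$). Multiplying by $e^{-i\lambda r}$ combines with $e^{-\lambda(t-ir)}$ to yield the uniform factor $e^{-\lambda t}$ in front, and assembling everything produces the claimed formula.

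The only delicate point is the memoryless-property step: one must be careful to assert the joint conditional distribution of all $n$ variables, not just the $i$ variables forced above $r$, so that the sum $S_n$ really reduces to $ir$ plus an independent Gamma$(n,\lambda)$ random variable. Once that reduction is stated cleanly, the rest is bookkeeping.
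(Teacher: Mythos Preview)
Your proof is correct and follows the same overall architecture as the paper: inclusion--exclusion on the events $\{X_k>r\}$, then symmetry to reduce to the single probability $\Pr(S_n\le t,\;X_1>r,\ldots,X_i>r)$, and finally truncation of the sum at $i=j$ because $t-ir<0$ for $i\ge j+1$.

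The one substantive difference is in how that inner probability is evaluated. The paper writes it out as an explicit $n$-fold iterated integral (the first $i$ variables ranging over $(r,\cdot)$, the remaining $n-i$ over $(0,\cdot)$) and asserts, after what it calls an ``easy (but lengthy)'' computation, that the answer is $e^{-\lambda t}\sum_{k\ge n}(\lambda(t-ir))^k/k!$. You instead invoke the memoryless property to see directly that
\[
\Pr(S_n\le t,\;X_1>r,\ldots,X_i>r)=e^{-i\lambda r}\,\Pr(S_n\le t-ir),
\]
and then apply the standard Gamma/Poisson tail identity. This is cleaner and more conceptual: it replaces a multi-page integral with a one-line probabilistic reduction, and it makes transparent why the factor $e^{-\lambda t}$ emerges uniformly (as $e^{-i\lambda r}\cdot e^{-\lambda(t-ir)}$). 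The paper's brute-force integration buys nothing extra here; your route is strictly more efficient.
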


\begin{proof}
It is an easy (but lengthy) exercise to show that for $0 \leq i \leq j$, and for $k=1,\ldots,i$, that $Pr(S_n \leq t, X_k > r)$ equals
\[
\underbrace{  \int_{r}^{t-(i-1)r}   \int_{r}^{t-(i-2)r-x_1}  \cdots  \int_{r}^{t - \sum\limits_{k=1}^{i-1} x_k}}_{i\text{ fold}} \, \underbrace{ \int_{0}^{t-\sum\limits_{k=1}^{i} x_k} \cdots \int_{0}^{t-\sum\limits_{k=1}^{n-1}x_k}}_{n-i\text{ fold}} \prod_{k=1}^{n}\lambda e^{-\lambda x_k} dx_n \ldots dx_1,
\]
and that the value of this integral is

\begin{equation} \label{eqn1_kk9287}
e^{- \lambda t} \sum\limits_{k=n}^{\infty} \frac{(\lambda (t-ir))^k}{k!}.
\end{equation}
The proof now proceeds by inclusion-exclusion as follows:
\begin{eqnarray} \label{eqn2_kk9287}
& & Pr(S_n \leq t, \bigcap_{k=1}^{n} (X_k \leq r)) = Pr(S_n \leq t) - Pr (S_n \leq t, \bigcup_{k=1}^{n} (X_k > r))\nonumber\\
&=& e^{-\lambda t} \sum\limits_{k=n}^{\infty} \frac{ (\lambda t)^k }{k!} - \sum\limits_{i=1}^{j} (-1)^{i-1} \sum\limits_{1 \leq k_1 < k_2 < \cdots < k_i \leq n}  Pr(S_n \leq t, \bigcap_{l = 1}^{i} (X_{k_l} > r))\nonumber\\
&=& e ^{- \lambda t} \sum\limits_{k=n}^{\infty} \frac{(\lambda t)^k}{k!} - \sum\limits_{i = 1}^{j} (-1)^{i-1} {n \choose i} Pr(S_n \leq t,
\bigcap_{k=1}^{i} (X_k > r)).
\end{eqnarray}
Substitution of (\ref{eqn1_kk9287}) into (\ref{eqn2_kk9287}) completes the proof.
\end{proof}

\begin{theorem}[Main Theorem]\label{thm22_kk9287}
For $jr \leq t < (j + 1) r, k = 0, \ldots , j$ and $l = 0, 1, \ldots$
\[
Pr( A(t) = k, D(t) = l) = e^{-\lambda t} { k+l \choose k} \sum\limits_{i = 0}^{j-k} (-1)^i {l+1 \choose i} \frac{(\lambda (t - (i+k)r))^l}{l!}
\]
\end{theorem}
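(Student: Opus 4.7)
My plan is to proceed in three stages: (i) use exchangeability to reduce to a canonical ordering of the interarrivals; (ii) rewrite the remaining probability in terms of
\[
Q(n,s) \;:=\; \Pr\!\Bigl(S_n \leq s,\ \bigcap_{k=1}^{n}(X_k \leq r)\Bigr),
\]
which is exactly the function computed by Lemma \ref{lemma1_kk9287}; and (iii) collapse the resulting three-term expression via Pascal's identity.

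For stage (i), the event $\{A(t) = k, D(t) = l\}$ says that among the first $k + l$ interarrivals exactly $k$ are censorings ($X_i > r$) and $l$ are failures ($X_i \leq r$), the $(k+l)$-th arrival has occurred by time $t$, and the $(k+l+1)$-st has not. By independence and exchangeability of the $X_i$, its probability equals $\binom{k+l}{k}$ times the probability of the canonical configuration $X_1, \ldots, X_k > r$, $X_{k+1}, \ldots, X_{k+l} \leq r$, $\sum_{j=1}^{k+l} Y_j \leq t$, $\sum_{j=1}^{k+l+1} Y_j > t$. The $k$ censoring constraints peel off a factor $e^{-\lambda k r}$. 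Setting $t' := t - kr$ and relabeling, what remains is
\[
P \;:=\; \Pr\bigl(X_1, \ldots, X_l \leq r,\ S_l \leq t',\ Y_{l+1} > t' - S_l\bigr).
\]

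For stage (ii), I split on whether $X_{l+1} > r$. If $X_{l+1} > r$ (probability $e^{-\lambda r}$), then $Y_{l+1} = r$, so the extra condition reduces to $t' - r < S_l \leq t'$ with all preceding $X_i \leq r$, contributing $e^{-\lambda r}[Q(l,t') - Q(l, t'-r)]$. If $X_{l+1} \leq r$, then $Y_{l+1} = X_{l+1}$ and the condition $X_{l+1} > t' - S_l$ is equivalent to $S_{l+1} > t'$; writing this event as a difference shows the contribution is $(1 - e^{-\lambda r}) Q(l, t') - Q(l+1, t')$. Adding the two cases yields
\[
P \;=\; Q(l, t') \;-\; e^{-\lambda r} Q(l, t'-r) \;-\; Q(l+1, t').
\]

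For stage (iii), I substitute Lemma \ref{lemma1_kk9287} into each $Q$, noting that $j' := j - k$ satisfies $j' r \leq t' < (j'+1) r$. A reindexing $i \mapsto i-1$ in the expansion of $e^{-\lambda r} Q(l, t'-r)$ (which contributes an extra $e^{-\lambda r}$ that combines with its $e^{-\lambda(t'-r)}$ into $e^{-\lambda t'}$) aligns its summands with those of $Q(l, t')$ at the common arguments $\lambda(t' - i r)$; Pascal's identity $\binom{l}{i} + \binom{l}{i-1} = \binom{l+1}{i}$ then fuses the first two terms into $e^{-\lambda t'} \sum_{i=0}^{j'} (-1)^i \binom{l+1}{i} \sum_{m \geq l} (\lambda(t'-ir))^m / m!$. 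The expansion of $Q(l+1, t')$ already carries the coefficient $\binom{l+1}{i}$ but with inner sum starting at $m = l+1$, so subtracting it leaves only the $m = l$ term, namely $(\lambda(t'-ir))^l / l!$. Multiplying through by $\binom{k+l}{k} e^{-\lambda k r}$ and using $e^{-\lambda k r} e^{-\lambda t'} = e^{-\lambda t}$ reproduces the stated formula.

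The conceptual steps are routine; the main obstacle will be the combinatorial bookkeeping in stage (iii) — juggling three sums with different starting indices and verifying the boundary cases ($k = j$ giving $j' = 0$; $l = 0$; and $t' - r < 0$, where $Q(l, t'-r) = 0$ is consistent with the reindexed sum being empty).
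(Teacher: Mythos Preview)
Your proposal is correct and follows essentially the same route as the paper: the same exchangeability reduction (stage~(i)), the same split on $X_{l+1}\lessgtr r$ yielding the identical three-term expression $Q(l,t')-e^{-\lambda r}Q(l,t'-r)-Q(l+1,t')$ (stage~(ii)), and the same application of Lemma~\ref{lemma1_kk9287} followed by a Pascal-identity collapse (stage~(iii)). Your ordering in stage~(iii)---first fusing $Q(l,t')$ with the reindexed $e^{-\lambda r}Q(l,t'-r)$ to produce the $\binom{l+1}{i}$ coefficient, then subtracting $Q(l+1,t')$ to strip off all but the $m=l$ term---is in fact slightly tidier than the paper's, which instead decomposes $\binom{l+1}{i}$ via Pascal in reverse and cancels cross-terms, but the two computations are equivalent.
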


\begin{proof}
If $T_n = \sum\limits_{i=1}^{n} Y_i$, then by conditioning on which $k$ of the first $k + l$ renewals are ``alive'' we see that
\begin{eqnarray} \label{eqn3_kk9287}
& & Pr(A(t) = k, D(t) = l ) = Pr(T_{k+l} \leq t, \,T_{k+l+1} > t, \, A(t) = k)\nonumber\\
&=& { k+ l \choose k} Pr \left(T_{k+l} \leq t, \,T_{k+l+1} > t, \, \bigcap_{i=1}^l (X_i \leq r), \bigcap_{i = l + 1}^{l+k } (X_i > r)\right)\nonumber\\
& = & {k+l \choose k} e^{-\lambda k r} Pr \left(S_l \leq t - kr, \,S_l + Y_{k+l+1} > t - kr, \, \bigcap_{i=1}^l (X_i \leq r)\right).
\end{eqnarray}
Now,
\begin{eqnarray*}
&& Pr\left(S_l \leq t - kr, \, S_l + Y_{k+l+1} > t - kr, \, \bigcap_{i=1}^l (X_i \leq r)\right)\\
&=& Pr\left(S_l \leq t - kr, \, S_l + Y_{l+1} > t - kr, \, \bigcap_{i=1}^l (X_i \leq r)\right),
\end{eqnarray*}
and by conditioning on whether or not $X_{l+1} \leq r$ we see that this last probability is
\begin{eqnarray*}
&&Pr\left(S_l \leq t - kr, \, S_{l+1} > t - kr, \, \bigcap_{i=1}^{l+1} (X_i \leq r) \right) \\
&& +\; e^{-\lambda r} Pr\left(t-(k+1)r < S_l \leq t-kr, \, \bigcap_{i=1}^{l}(X_i \leq r)\right)\\
&=& (1-e^{-\lambda r}) Pr\left(S_l \leq t-kr, \, \, \bigcap_{i=1}^{l}(X_i \leq r)\right) - Pr\left(S_{l+1} \leq t-kr, \, \bigcap_{i=1}^{l+1} (X_i \leq r) \right) \\
&& +\; e^{-\lambda r} Pr\left(S_l \leq t-kr,\, \bigcap_{i=1}^{l}(X_i \leq r)\right) - e^{-\lambda r} Pr\left(S_l \leq t-(k+1)r, \, \bigcap_{i=1}^{l}(X_i \leq r)\right)\\
&=& Pr\left(S_l \leq t-kr, \, \bigcap_{i=1}^{l}(X_i \leq r)\right) - Pr\left(S_{l+1} \leq t-kr, \, \bigcap_{i=1}^{l+1}(X_i \leq r)\right)\\
&& -\; e^{-\lambda r} Pr\left(S_l \leq t-(k+1)r, \, \bigcap_{i=1}^{l}(X_i \leq r)\right).
\end{eqnarray*}
Apply Lemma \ref{lemma1_kk9287} to each of the last three probabilities to see that
\begin{eqnarray} \label{eqn3a_kk9287}
&& Pr(S_l \leq t-kr, \, S_l + Y_{k+l+1} > t-kr,\, \bigcap_{i=1}^{l}(X_i \leq r))\nonumber\\
&=& e^{-\lambda (t-kr)} \sum\limits_{i=0}^{j-k} (-1)^i {l \choose i} \sum\limits_{n=l}^{\infty} \frac{(\lambda(t-(i+k)r))^n}{n!}\nonumber\\
&& -e^{-\lambda (t-kr)} \sum\limits_{i=0}^{j-k} (-1)^i {l+1 \choose i} \sum\limits_{n=l+1}^{\infty} \frac{(\lambda(t-(i+k)r))^n}{n!}\nonumber\\
&& - e^{-\lambda (t-kr)} \sum\limits_{i=0}^{j-k-1} (-1)^i {l \choose i} \sum\limits_{n=l}^{\infty} \frac{(\lambda(t-(i+k+1)r))^n}{n!}\nonumber\\
&=&e^{-\lambda(t-kr)}
\Bigg\{ \sum\limits_{i=0}^{j-k} (-1)^i {l \choose i} \frac{(\lambda (t - (i + k)r ))^l}{l!}\nonumber\\
&&- \sum\limits_{i=1}^{j-k}(-1)^i { l \choose i-1} \sum\limits_{n=l+1}^{\infty} \frac{(\lambda (t - (i+k)r))^{n}}{n!}\nonumber\\
&&+ \sum\limits_{i = 1}^{j-k} (-1)^i {l \choose i-1} \sum\limits_{n=l}^{\infty} \frac{(\lambda (t - (i+k)r))^n}{n!} \Bigg\}\nonumber\\
&=& e^{-\lambda(t-kr)} \Bigg\{ \sum\limits_{i=0}^{j-k} (-1)^i {l \choose i} \frac{(\lambda (t - (i + k)r ))^l}{l!}
+\sum\limits_{i=1}^{j-k}(-1)^i { l \choose i-1} \frac{(\lambda (t - (i+k)r))^l}{l!} \Bigg\}\nonumber\\
&=&e^{-\lambda (t-kr)} \Bigg\{ \frac{\lambda (t-kr))^l}{l!} + \sum\limits_{i=1}^{j-k} (-1)^i {l+1 \choose i} \frac{(\lambda (t - (i+k)r))^l}{l!}\Bigg\}\nonumber\\
&=& e^{- \lambda (t-kr)} \sum\limits_{i=0}^{j-k} (-1)^i {l+1 \choose i} \frac{(\lambda (t-(i+k)r))^l}{l!}.
\end{eqnarray}

\noindent Substitution of (\ref{eqn3a_kk9287}) into (\ref{eqn3_kk9287}) completes the proof.
\end{proof}

Before proceeding further we need another lemma.

\begin{lemma} \label{lemma2_kk9287}
For $m = 1, 2,\ldots$ and $d \in \{0, 1, \ldots, m \},$
\[
\sum\limits_{k=0}^{m} (-1)^k { m \choose k} k^d =
\begin{cases}
0 &  \text{if} \quad d< m\\
(-1)^m m! & \text{if} \quad d=m.
\end{cases}
\]
\end{lemma}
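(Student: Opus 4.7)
The plan is to prove this classical identity by packaging the entire family of sums (indexed by $d$) into a single generating function and reading off the coefficients. Define
\[
F(x) \;=\; \sum_{k=0}^{m} (-1)^k \binom{m}{k} e^{kx}.
\]
By the binomial theorem applied directly, $F(x) = (1 - e^x)^m = (-1)^m (e^x - 1)^m$. On the other hand, expanding $e^{kx} = \sum_{d \geq 0} k^d x^d / d!$ and interchanging the two sums gives
\[
F(x) \;=\; \sum_{d \geq 0} \Bigl(\sum_{k=0}^{m} (-1)^k \binom{m}{k} k^d\Bigr) \frac{x^d}{d!},
\]
so the quantity the lemma wants to evaluate is exactly $d!$ times the coefficient of $x^d$ in $F(x)$.

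Now I read off those coefficients from the closed form $(-1)^m (e^x - 1)^m$. Since $e^x - 1 = x + x^2/2! + x^3/3! + \cdots$ has no constant term and leading term $x$, the $m$-th power $(e^x - 1)^m$ begins with $x^m$ and has no nonzero coefficient for any power of $x$ smaller than $x^m$. Hence for $d < m$ the coefficient of $x^d$ in $F(x)$ is $0$, and for $d = m$ it equals $(-1)^m \cdot 1$. Multiplying by $d!$ gives $0$ and $(-1)^m m!$ respectively, matching the two cases of the lemma.

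The whole argument is essentially one line once the generating function is in place; there is no real obstacle. The only technical point worth being careful about is justifying the interchange of the finite sum over $k$ with the absolutely convergent series for $e^{kx}$, which is immediate because there are only $m+1$ terms in the outer sum. (An equally clean alternative would be to recognize the left-hand side as $(-1)^m$ times the $m$-th forward difference $\Delta^m f(0)$ of $f(k) = k^d$, and invoke the standard fact that $\Delta$ lowers the degree of a polynomial by one so that $\Delta^m$ annihilates every polynomial of degree $<m$ and sends $k^m$ to $m!$; I would mention this as a remark but carry out the generating-function version as the primary proof.)
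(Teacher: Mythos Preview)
Your proof is correct and takes a genuinely different route from the paper's. The paper proceeds purely algebraically: it first evaluates the sum with $k^d$ replaced by the falling factorial $k(k-1)\cdots(k-(l-1))$, which factors nicely as $m(m-1)\cdots(m-(l-1))$ times a binomial sum that collapses to $(1-1)^{m-l}$; it then writes $k^d$ in the basis of falling factorials (with leading coefficient $c_{dd}=1$) and concludes by linearity. Your exponential generating function argument packages all values of $d$ at once into $F(x)=(1-e^x)^m$ and reads off the result from the order of vanishing of $e^x-1$ at $x=0$. Your approach is shorter and more conceptual, and it extends immediately to $d>m$ (the coefficients are then $(-1)^m m!\, S(d,m)$ via the EGF for Stirling numbers of the second kind). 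The paper's argument, on the other hand, stays entirely within finite algebra and needs no power series or convergence considerations; it is also closer in spirit to the forward-difference remark you made at the end, since the falling-factorial identity is essentially the statement that $\Delta^l$ annihilates polynomials of degree below $l$.
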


\begin{proof}
Because Lemma \ref{lemma2_kk9287} follows immediately from the binomial theorem if $d=0$, we assume in what follows that $d$ is positive. Observe that for $l \in \{0, 1, \cdots, m\}$,
\begin{eqnarray} \label{eqn4_kk9287}
&&\sum\limits_{k=0}^{m} (-1)^k {m \choose k} k(k-1)\cdots(k-(l-1))\nonumber\\
&=& m(m-1)\cdots(m-(l-1)) \sum\limits_{k=l}^{m} (-1)^k \frac{(m-l)!}{(k-l)! (m-k)!}\nonumber\\
&=& m(m-1) \cdots (m-(l-1)) \sum\limits_{j=0}^{m-l} (-1)^{j+l}{m-l \choose j}\nonumber\\
&=& m(m-1) \cdots (m-(l-1))(-1)^l (-1+1)^{m-l}\nonumber\\
&=& \begin{cases}  0 & \text{if} \quad l< m\\
				(-1)^m m! & \text{if} \quad l=m,
\end{cases}
\end{eqnarray}
where the third equation follows from the binomial theorem.

For any positive integer $j$, the factorial powers $k$, $k(k-1)$,\ldots, $\displaystyle\prod_{i=1}^{j} (k-(i-1))$ are a basis for the vector space of real polynomials of degree at most $j$, so there are unique real constants $c_{1j}, c_{2j}, \ldots, c_{jj}$ (with $c_{jj}$ = 1) such that
\[
k^j = \sum\limits_{l=1}^{j} c_{lj} k(k-1) \cdots (k-(l-1))
\]
Hence
\begin{eqnarray*}
\sum\limits_{k=0}^{m} (-1)^k {m \choose k} k^d &=&\sum\limits_{k=0}^{m}(-1)^k {m \choose k} \sum\limits_{l=1}^{d} c_{ld} k(k-1)\cdots(k-(l-1)) \\
&=& \sum\limits_{l=1}^{d} c_{ld} \sum\limits_{k=0}^{m} (-1)^k {m \choose k} k (k-1) \cdots (k-(l-1)),
\end{eqnarray*}
and it follows from \ref{eqn4_kk9287} that
\begin{eqnarray*}
\sum\limits_{k=0}^{m} (-1)^k {m \choose k} k^d &=& c_{dd} \sum\limits_{k=0}^{m} (-1)^k {m \choose k} k(k-1) \cdots (k-(d-1))\\
&=& \begin{cases}
  0 \quad & \text{if} \quad d< m \\
  (-1)^m m! & \text{if} \quad  d=m.
\end{cases}
\end{eqnarray*}
\end{proof}
We close this section with three corollaries.

\begin{corollary} \label{corollary1_kk9287}
For all $t > 0$, $D(t) \sim Poisson  (\mu = \lambda t)$.
\end{corollary}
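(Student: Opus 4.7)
The plan is to marginalize the joint distribution from Theorem~\ref{thm22_kk9287} over $k$, swap the order of summation, and reduce the surviving computation to a short binomial identity.

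For $jr \leq t < (j+1)r$, I would start with
\[
Pr(D(t) = l) = \sum_{k=0}^{j} Pr(A(t) = k, D(t) = l)
\]
and substitute the formula from Theorem~\ref{thm22_kk9287}. This produces a double sum in $k$ and $i$, with $k$ running from $0$ to $j$ and $i$ from $0$ to $j-k$. I would then introduce $n = i+k$ and interchange the order of summation, so that $n$ runs from $0$ to $j$ and, for each fixed $n$, $k$ runs from $0$ to $n$. The marginal reorganizes as
\[
Pr(D(t) = l) = \frac{e^{-\lambda t}}{l!} \sum_{n=0}^{j} (\lambda(t-nr))^l \, C_n, \qquad C_n := \sum_{k=0}^{n} (-1)^{n-k} { k+l \choose k} { l+1 \choose n-k}.
\]

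The crux is the identity $C_0 = 1$ and $C_n = 0$ for $n \geq 1$. The slickest argument is by generating functions: $C_n$ is the $x^n$ coefficient of the Cauchy product $(1-x)^{-(l+1)} \cdot (1-x)^{l+1} = 1$, so only $C_0$ is nonzero. Once this is in hand, only the $n=0$ term survives in the sum above, and we obtain $Pr(D(t) = l) = e^{-\lambda t}(\lambda t)^l/l!$, the Poisson$(\lambda t)$ mass function at $l$.

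The main obstacle is establishing the identity for $C_n$. A route that stays inside the combinatorial toolkit already developed in the paper is to expand $(\lambda(t-nr))^l$ via the binomial theorem in powers of $n$; then the required equality reduces to showing $\sum_{n=0}^{j} C_n n^d = \delta_{d,0}$ for $0 \leq d \leq l$, a statement that Lemma~\ref{lemma2_kk9287} is well suited to establish after one rewrites the falling-factorial pieces of $C_n$ in a form compatible with the alternating-sign sum there. In either formulation, this single convolution identity is the only nontrivial step; once it is in place, the rest of the proof is routine bookkeeping.
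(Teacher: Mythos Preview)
Your proof matches the paper's structure exactly: marginalize over $k$, substitute the Theorem~\ref{thm22_kk9287} formula, set $n=i+k$ (the paper writes $m$), and reduce everything to the vanishing of the inner sum $C_n=\sum_{i=0}^{n}(-1)^i\binom{l+1}{i}\binom{l+n-i}{l}$ for $n\ge 1$. Where you differ is only in how you dispatch $C_n$. The paper applies Lemma~\ref{lemma2_kk9287} directly to each $C_n$: since $\binom{l+n-i}{l}$ is a polynomial in $i$ of degree $l$ (and vanishes at $i=n+1,\dots,l+1$, so the sum may be extended to $i=l+1$ at no cost), Lemma~\ref{lemma2_kk9287} with parameters $l+1$ and $d\le l$ yields zero. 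Your generating-function argument---reading $C_n$ as the $x^n$ coefficient of $(1-x)^{-(l+1)}(1-x)^{l+1}=1$---is slicker and bypasses Lemma~\ref{lemma2_kk9287} entirely; it is a perfectly valid substitute. Your second suggested route, expanding $(\lambda(t-nr))^l$ in powers of $n$ and proving $\sum_{n} C_n n^d=\delta_{d,0}$, is more circuitous than what the paper actually does; the paper kills each $C_n$ individually rather than working with weighted sums of them.
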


\begin{proof}
Let $j= \left \lfloor{t/r}\right \rfloor $ so that $jr \leq t < (j+1)r$. For $l = 0, 1,\ldots$
\[Pr (D(t) = l) = \sum\limits_{k=0}^{j} Pr(A(t) = k, D(t) = l )
\]
\[
= e^{- \lambda t} \sum\limits_{k=0}^{j} \sum\limits_{i=0}^{j-k} (-1)^i { l+1 \choose i} {k+l \choose l} \frac{(\lambda (t - (i+k)r))^l}{l!}.
\]
Letting $m=i+k$, we see that this sum is
\[
e^{-\lambda t} \sum\limits_{m=0}^{j} \sum\limits_{i=0}^{m}(-1)^i {l+1 \choose i} {l+m-i \choose l} \frac{(\lambda (t-mr))^l}{l!}
\]
\[
= \underbrace{e^{- \lambda t} \frac{(\lambda t)^l}{l!}}_\text{m = 0} + \sum\limits_{m=1}^{j} e^{- \lambda t} \frac{(\lambda (t-mr))^l}{l!} \sum\limits_{i=0}^{m} (-1)^i {l+1 \choose i} {l+m-i \choose l}
\]
Because ${ l+m - i \choose l}$ is a polynomial in $i$ of degree $l$, it follows from Lemma \ref{lemma2_kk9287} that
\[
\sum\limits_{i=0}^{m} (-1)^i {l+1 \choose i} {l+m-i \choose l} = 0.
\]
Hence $P(D(t) = l ) = e^{- \lambda t} \frac{(\lambda t)^l}{l!}$ and so $D \sim Poisson(\mu = \lambda t)$.
\end{proof}

Note that this result is not surprising in light of the fact that the exponential distribution is memoryless. Given that an exponential random variable with failure rate $\lambda$ has ``survived'' for an amount of time which is a multiple of $r$, the conditional distribution of its remaining life is still exponential with failure rate $\lambda$. Hence a used component is as good as a new one and the act of replacing a functioning component does not change the distribution of time until the next failure. That is, the times at which failed components are replaced form an ordinary Poisson process with rate $\lambda$, even though one or more replacements of still functioning components may have taken place between any pair of consecutive failures or before the first failure.

The distribution of $A(t)$ is in general quite complicated. However, our next corollary gives its expectation.

\begin{corollary} \label{corollary2_kk9287}
For $jr \leq t < (j+1)r$,
\[
 E [A(t)] = \sum\limits_{m=1}^{j} e^{- \lambda mr} (1+\lambda (t-mr))
 \]
\end{corollary}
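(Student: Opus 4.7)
The plan is to express $A(t)$ as a sum of indicators, reduce each indicator's probability via the memorylessness of the $X_k$, and turn the result into a one-step recursion in $t$ that unwinds to the claimed sum.

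First, since the $n$-th renewal is an ``alive'' replacement precisely when $X_n > r$ (so $Y_n = r$), I would write
\[
A(t) = \sum_{n=1}^{\infty} \mathbf{1}\{X_n > r,\, T_n \leq t\},
\]
and note that on $\{X_n > r\}$ we have $Y_n = r$, hence $T_n = T_{n-1} + r$. Thus $\{X_n > r,\, T_n \leq t\} = \{X_n > r,\, T_{n-1} \leq t - r\}$, and because $T_{n-1}$ depends only on $X_1, \ldots, X_{n-1}$ it is independent of $X_n$. This gives
\[
Pr(X_n > r,\, T_n \leq t) = e^{-\lambda r}\, Pr(T_{n-1} \leq t - r).
\]
Summing over $n \geq 1$ and recognizing $E[N(s)] = \sum_{n \geq 1} Pr(T_n \leq s)$, I obtain $E[A(t)] = e^{-\lambda r}\bigl(1 + E[N(t - r)]\bigr)$ for $t \geq r$, while $E[A(t)] = 0$ for $0 \leq t < r$.

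Next I would use $N(s) = A(s) + D(s)$ together with Corollary \ref{corollary1_kk9287} (which gives $E[D(s)] = \lambda s$) to substitute $E[N(t-r)] = E[A(t-r)] + \lambda(t-r)$. Writing $a(t) := E[A(t)]$, this yields the recursion
\[
a(t) = e^{-\lambda r}\bigl(1 + \lambda(t - r) + a(t - r)\bigr) \quad (t \geq r), \qquad a(t) = 0 \ \ (0 \leq t < r).
\]

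Finally, for $jr \leq t < (j+1)r$ I would iterate this recursion $j$ times. Each iteration peels off a term $e^{-\lambda m r}\bigl(1 + \lambda(t - mr)\bigr)$ for $m = 1, \ldots, j$, leaving a remainder $e^{-\lambda j r} a(t - jr)$; this remainder vanishes because $t - jr \in [0, r)$, so the sum of the extracted terms is exactly the claimed formula. The only real subtlety lies in justifying the event identity $\{X_n > r,\, T_n \leq t\} = \{X_n > r,\, T_{n-1} \leq t - r\}$ together with the independence of $X_n$ and $T_{n-1}$; once these are in hand, the rest is a routine unwinding of the recursion.
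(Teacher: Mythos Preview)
Your proof is correct and takes a genuinely different route from the paper. The paper computes $E[A(t)]$ by summing $k\,Pr(A(t)=k,D(t)=l)$ over all $k,l$ using the explicit joint distribution from Theorem~\ref{thm22_kk9287}, changes variables to $m=i+k$, and then invokes the combinatorial identity of Lemma~\ref{lemma2_kk9287} to collapse the resulting alternating sum $\sum_{i=0}^{m}(-1)^i\binom{l+1}{i}(m-i)\binom{m-i+l}{l}$ to $1+l$. Your argument sidesteps the joint distribution entirely: writing $A(t)$ as a sum of indicators and exploiting the independence of $X_n$ from $T_{n-1}$ yields the renewal identity $E[A(t)]=e^{-\lambda r}\bigl(1+E[N(t-r)]\bigr)$ for $t\ge r$, which together with $E[D(s)]=\lambda s$ from Corollary~\ref{corollary1_kk9287} gives a first-order recursion that unwinds directly to the stated sum. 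Your approach is markedly more elementary --- it uses neither the Main Theorem nor Lemma~\ref{lemma2_kk9287} --- and it makes transparent the telescoping structure of the answer; the paper's route, on the other hand, showcases the joint distribution and Lemma~\ref{lemma2_kk9287} as general computational tools (tools it also relies on to prove Corollary~\ref{corollary1_kk9287} in the first place).
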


\begin{proof}
From Theorem \ref{thm22_kk9287} it follows that
$$E [A(t)] = \sum\limits_{l=0}^{\infty} \sum\limits_{k=0}^{j} k Pr(A(t) = k, D(t) = l)$$
$$= e^{- \lambda t} \sum\limits_{k=0}^{j} \sum\limits_{i=0}^{j-k} \sum\limits_{l=0}^{\infty} (-1)^i k {k+l \choose k} {l+1 \choose i} \frac{(\lambda (t - (i+k)r))^l}{l!}.$$
Now let $m = i+k$ to conclude that
  \begin{multline} \label{eqn5_kk9287}
 E[A(t)]= e^{- \lambda t} \sum\limits_{m=0}^{j} \sum\limits_{l=0}^{\infty} \frac{(\lambda (t-mr))^l}{l!} \sum\limits_{i=0}^{m} (-1)^i {l+1 \choose i} (m-i) {m-i+l \choose l}.
\end{multline}
Because $(m-i) {m-i+l \choose l}$ is a polynomial in $i$ of degree $l+1$ with leading coefficient $\frac{(-1)^{l+1}}{l!}$, it follows from Lemma \ref{lemma2_kk9287} that
\[
\sum\limits_{i=0}^m (-1)^i {l+1 \choose i} (m-i) {m-i+l \choose l} = \frac{(-1)^{l+1}}{l!} (-1)^{l+1} (l+1)! = 1+ l.
\]
Substitution into (\ref{eqn5_kk9287}) yields
\[
 E [A(t)] = e^{-\lambda t} \sum\limits_{m=1}^{j} \sum\limits_{l=0}^{\infty} (1+l) \frac{(\lambda (t-mr))^l}{l!}
 \]
 \[
 = \sum\limits_{m=1}^j e^{-\lambda mr} \sum\limits_{l=0}^{\infty} (1+l) e^{-\lambda (t-mr)} \frac{(\lambda (t-mr))^l}{l!}
 \]
 \[= \sum\limits_{m=1}^{j} e^{- \lambda mr} (1+\lambda (t-mr)).
 \]

\end{proof}

Next we consider the behavior of $\ENt/t$.
\begin{corollary}\label{cor:Ntr2r2}

\begin{enumerate}
  \item If $r < 1/\lambda$, then $\ENt/t$ is strictly increasing for $t \in [r, \infty)$.
  \item For every $r$ and $\lambda$, there is a positive integer $N$ such that $\ENt/t$ is strictly increasing on $[Nr, \infty)$
\end{enumerate}
 \end{corollary}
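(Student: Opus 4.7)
The plan is to combine Corollaries~\ref{corollary1_kk9287} and~\ref{corollary2_kk9287} to express $\ENt/t$ piecewise in $t$ and then analyze both the sign of its derivative on each subinterval $[jr,(j+1)r)$ and the jumps at the breakpoints $t=jr$. Since $N(t)=A(t)+D(t)$ and $E[D(t)]=\lam t$, on $[jr,(j+1)r)$ one has $\ENt = \lam a_j\,t + b_j$, where
\[
a_j := 1+\sum_{m=1}^{j}\e^{-\lam mr}\qquad\text{and}\qquad b_j := \sum_{m=1}^{j}\e^{-\lam mr}(1-\lam mr).
\]
Hence on $(jr,(j+1)r)$, $\ENt/t = \lam a_j + b_j/t$ has derivative $-b_j/t^2$, so $\ENt/t$ is strictly increasing on the open subinterval iff $b_j<0$. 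Using $a_j-a_{j-1}=\e^{-\lam jr}$ and $b_j-b_{j-1}=\e^{-\lam jr}(1-\lam jr)$, the upward jump of $\ENt/t$ at $t=jr$ evaluates to
\[
\lam\,\e^{-\lam jr}\;+\;\frac{\e^{-\lam jr}(1-\lam jr)}{jr}\;=\;\frac{\e^{-\lam jr}}{jr}\;>\;0,
\]
so $\ENt/t$ always jumps strictly upward at each multiple of $r$. Consequently, to prove $\ENt/t$ is strictly increasing on $[Nr,\infty)$ it suffices to verify $b_j<0$ for every $j\geq N$.

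For Part~2, let $j\to\infty$ and apply the standard identities $\sum_{m=1}^{\infty}\rho^m=\rho/(1-\rho)$ and $\sum_{m=1}^{\infty}m\rho^m=\rho/(1-\rho)^2$ with $\rho=\e^{-\lam r}$ to obtain
\[
b_\infty \;=\; \frac{\e^{-\lam r}\,[(1-\e^{-\lam r})-\lam r]}{(1-\e^{-\lam r})^2}.
\]
Since $1-\e^{-x}<x$ for every $x>0$, $b_\infty<0$, so there exists a positive integer $N$ with $b_j<0$ for all $j\geq N$, which combined with the first paragraph proves Part~2.

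For Part~1, the goal is to verify $b_j<0$ for every $j\geq 1$ under the stated hypothesis relating $r$ and $1/\lam$, which by the first paragraph immediately yields strict monotonicity on $[r,\infty)$. The main technical obstacle is to control the partial sums $b_j$ uniformly in $j$, since the sign of an individual summand $\e^{-\lam mr}(1-\lam mr)$ depends on whether $\lam mr$ exceeds $1$. When the hypothesis forces every summand to be strictly negative, termwise negativity of $b_j$ is immediate; otherwise one would need a cancellation argument (for instance Abel summation, or grouping positive and negative terms) comparing the first few terms against the remaining tail to show that the sum still comes out negative.
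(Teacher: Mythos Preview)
Your setup, your jump computation at the breakpoints, and your proof of Part~2 are all correct and in fact sharper than the paper's: the paper writes $\ENt/t=\lambda(1+u_n)+(u_n-\lambda r v_n)/t$ with $u_n=\sum_{k=1}^n e^{-\lambda kr}$ and $v_n=\sum_{k=1}^n k e^{-\lambda kr}$, which is your $\lambda a_j+b_j/t$ with $b_j=u_j-\lambda r v_j$, and then argues $\lambda r v_n/u_n\to \lambda r/(1-e^{-\lambda r})>1$, exactly equivalent to your $b_\infty<0$. The paper's proof, incidentally, never checks the jumps at $t=jr$; your computation $\e^{-\lam jr}/(jr)>0$ fills that gap.

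The genuine gap is in Part~1, where you hedge rather than finish. You should have pushed your own observation to its conclusion: under the hypothesis as written, $r<1/\lambda$, the very first term gives $b_1=e^{-\lambda r}(1-\lambda r)>0$, so $\ENt/t$ is strictly \emph{decreasing} on $(r,2r)$ and the claim is false. No cancellation argument can rescue this. What is actually going on is a typo in the statement: the paper's own proof begins ``If $r>1/\lambda$'', and Figure~\ref{fig:ENtt} confirms that $r>1/\lambda$ is the monotone case while $r<1/\lambda$ initially decreases. Under the corrected hypothesis $r>1/\lambda$, your termwise route is immediate, since $\lambda mr\geq\lambda r>1$ makes every summand of $b_j$ strictly negative; the paper's version of the same step is the inequality chain $u_n\leq v_n<\lambda r v_n$.
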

\begin{proof}
For statement 1, it follows from Corollaries \ref{corollary1_kk9287} and \ref{corollary2_kk9287} that for $n \in \{ 1, 2, \ldots\}$ and $t \in [nr, (n+1)r)$ that
\[ \ENt/t = \lambda (1 + u_n) + \frac{u_n-\lambda r v_n}{t},\]
where
\[ u_n = \sum\limits_{k=1}^{n} e^{-\lambda k r} = \frac{e^{-\lambda r}}{1- e^{-\lambda r}} (1 - e^{-\lambda n r})\]
and
\[ v_n = \sum\limits_{k=1}^{n} ke^{-\lambda k r} = \frac{e^{-\lambda r}}{(1-e^{-\lambda r})^2} \left( 1 - (n+1)e^{-\lambda n r} + ne^{-\lambda (n+1)r)}\right).\]

\noindent If $r > 1/\lambda$, then because $u_1 = v_1$ and $u_n < v_n$ for $n \geq 2$, we see that $u_n < \lambda r v_n$ for $n \geq 1$ and conclude that $\ENt/t$ is strictly increasing for $t \in [r, \infty)$.

Next, fix $r$ and $\lambda$ and suppose that $r \leq 1/\lambda$. To prove statement 2 it will suffice to show that for sufficiently large $n$, $u_n < \lambda r v_n$, or equivalently, $\lambda r v_n/u_n > 1$. But $$\frac{\lambda r v_n}{u_n} \rightarrow \frac{\lambda r}{1-e^{-\lambda r}} > 1$$ as $n \rightarrow \infty$, so statement 2 follows.
\end{proof}

It should be noted that, because the $\left\{u_n\right\}_{n=1}^\infty$ and $\left\{v_n\right\}_{n=1}^\infty$ sequences are both convergent and because $1+u_n\rightarrow 1/(1-\emlx[r])$ as $n\rightarrow\infty$, it follows that $\ENt /t \rightarrow\lam / (1-\emlx[r])$ as $t\rightarrow\infty$, as guaranteed by the Elementary Renewal Theorem (see \cite{Barlow} or \cite{Ross}).

Examples of Corollary~\ref{cor:Ntr2r2} and the limiting behavior of $\ENt /t$ are shown in Figure~\ref{fig:ENtt} where in both figures the jump points of $t$, that is $t=r, 2r, 3r, \ldots$, are indicated by $\times$'s. In (a), $\ENt /t$ is increasing for $t \geq r$ as guaranteed by the first statement in the Corollary; in (b), $\ENt /t$ is at first decreasing for $t>r$ (except at the jumps), but eventually is increasing, as guaranteed by the second statement.

\begin{figure}[H]
\centering
\begin{subfigure}{.5\textwidth}
  \centering
  \includegraphics[width=.9\linewidth]{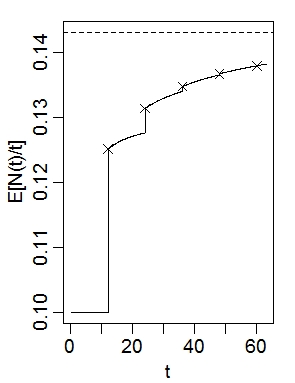}
  \caption{$r=12 > 1/\lam$}
  \label{fig:sub1}
\end{subfigure}%
\begin{subfigure}{.5\textwidth}
  \centering
  \includegraphics[width=.9\linewidth]{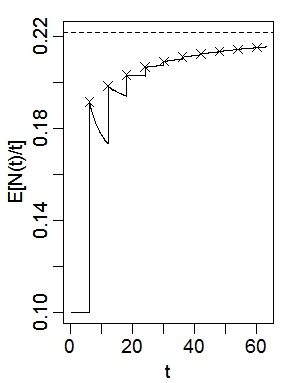}
  \caption{$r=6 < 1/\lam$}
  \label{fig:sub2}
\end{subfigure}
\caption{$E[N(t)]/t$ vs. $t$, for $\lam=1/10$. Dashed line is $\lim_{t\rightarrow\infty} E[N(t)]/t$.}
\label{fig:ENtt}
\end{figure}

\section{Random Replacement Times: the RaRT Process}\label{sec:RRT}
Next we consider the situation in which the replacement time $r$ is a realization of an associated random variable $R$, resulting in a doubly-stochastic process that we denote as a replace-after-random-time (RaRT) process. In what follows, we assume that $R$ has an absolutely continuous distribution on $(0, \infty)$ with probability density function (pdf) $f(r)$, and that for $t > 0$, $N(t) = \max\{n \geq 0: \sum\limits_{k=1}^{n}$min$(X_k, R) \leq t \}$, where, as before, $X_1, \, X_2,\, \ldots$ are independent random variables, each having the exponential distribution with failure rate $\lambda$. Finally, we assume that $R$ is independent of $X_1, \, X_2, \,\ldots$.

The following theorem expresses the expectation of $N(t)$ in terms of the pdf of the replacement time.

\begin{theorem}
\label{thm:EENtR}
\begin{equation}
\label{eqEENtR:soln}
E[N(t)] = \sum\limits_{m=1}^{\infty} \int_{0}^{t/m} e^{-\lambda mr} \left( 1 + \lambda (t-mr)\right) f(r) \,dr. \end{equation}
\end{theorem}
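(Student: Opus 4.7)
My plan is to condition on the replacement time $R$, apply Corollaries~\ref{corollary1_kk9287} and~\ref{corollary2_kk9287} to write $E[N(t)\mid R=r]$ in closed form, and then integrate out $R$, swapping a sum and an integral via Tonelli's theorem.

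First, because $R$ is independent of $X_1, X_2, \ldots$, conditional on $R = r$ the RaRT process coincides with the RaFT process with fixed replacement time $r$. Hence the decomposition $N(t) = A(t) + D(t)$ applies conditionally, and the two earlier corollaries together give
\[
 E[N(t) \mid R = r] \;=\; E[D(t) \mid R = r] + E[A(t) \mid R = r] \;=\; \lambda t + \sum_{m=1}^{\lfloor t/r \rfloor} e^{-\lambda m r}\bigl(1 + \lambda(t-mr)\bigr).
\]
The law of total expectation then yields
\[
 E[N(t)] \;=\; \int_0^\infty E[N(t) \mid R = r]\, f(r)\, dr,
\]
into which the conditional expectation above is substituted.

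The one genuine technical step is the interchange of sum and integral. The key observation is that
\[
 1 \leq m \leq \lfloor t/r \rfloor \quad \Longleftrightarrow \quad m \geq 1 \text{ and } r \leq t/m,
\]
and on the region $r \leq t/m$ we have $t - mr \geq 0$, so every summand is nonnegative. Tonelli's theorem therefore justifies
\[
 \int_0^\infty \sum_{m=1}^{\lfloor t/r \rfloor} e^{-\lambda m r}\bigl(1 + \lambda(t-mr)\bigr) f(r)\, dr \;=\; \sum_{m=1}^{\infty} \int_0^{t/m} e^{-\lambda m r}\bigl(1 + \lambda(t-mr)\bigr) f(r)\, dr,
\]
which is exactly the sum that appears in~(\ref{eqEENtR:soln}).

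Substantively, all the hard computation has already been carried out in Corollaries~\ref{corollary1_kk9287} and~\ref{corollary2_kk9287}; what remains is essentially just conditioning, an application of the law of total expectation, and a Fubini/Tonelli exchange, and nonnegativity of the integrand on the relevant region makes the exchange entirely routine. I do not foresee a real obstacle in carrying out the plan.
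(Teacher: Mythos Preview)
Your proposal is correct and follows essentially the same route as the paper: condition on $R$, invoke Corollaries~\ref{corollary1_kk9287} and~\ref{corollary2_kk9287} for the conditional expectation, and then interchange the sum and the integral. The only cosmetic difference is that the paper makes the interchange explicit by first partitioning $(0,\infty)$ into the intervals $(t/(n+1),\,t/n]$ on which $\lfloor t/r\rfloor=n$ and then swapping the two resulting sums via Fubini, whereas you go directly through the equivalence $1\le m\le\lfloor t/r\rfloor \Leftrightarrow r\le t/m$ and appeal to Tonelli using nonnegativity; the two arguments are equivalent, and your nonnegativity observation in fact supplies the hypothesis the paper leaves implicit. (Note also that, just as in the paper's own derivation, your computation produces $\lambda t$ plus the displayed sum; the statement of the theorem as printed omits the $\lambda t$ term.)
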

\begin{proof}
By conditioning on the value of $R$ and using the fact that $R$ is independent of $X_1, \, X_2, \, \ldots$, we see from Corollaries \ref{corollary1_kk9287} and \ref{corollary2_kk9287} that
\begin{eqnarray*}
E[N(t)] &=& \int_{0}^{\infty} E \left( N(t) \mid R = r \right) f(r) \,dr \\
&=& \int_{0}^{\infty} \left( \lambda t + \sum\limits_{m-1}^{ \lfloor{\frac{t}{r}}\rfloor } e^{-\lambda m r} (1 + \lambda(t-mr)) \right) f(r)\,dr\\
&=& \lambda t + \sum\limits_{n=1}^{\infty} \int_{t/(n+1)}^{t/n} \sum\limits_{m=1}^{n} e^{-\lambda m r} ( 1 + \lambda(t-mr)) f(r) \,dr\\
&=& \lambda t + \sum\limits_{m=1}^{\infty} \sum\limits_{n=m}^{\infty} \int_{t/(n+1)}^{t/n} e^{-\lambda mr} (1 + \lambda (t-mr)) f(r) \,dr\\
&=& \lambda t + \sum\limits_{m=1}^{\infty} \int_{0}^{t/m} e^{-\lambda mr} ( 1 + \lambda(t-mr)) f(r) \,dr,
\end{eqnarray*}
where the fourth equation follows by an application of Fubini's Theorem.
\end{proof}

In the next two corollaries, we consider the behavior of $E[N(t)]$ based on the behavior of $f$ near 0.
\begin{corollary}[of Theorem~\ref{thm:EENtR}]\label{cor:EENtR1}
 If for some $\epsilon > 0$ and $\delta > 0$, $f(r)>\epsilon$ for $0<r<\delta$, then $E[N(t)]=\infty$.
\end{corollary}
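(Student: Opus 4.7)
The plan is to use the explicit formula from Theorem \ref{thm:EENtR}, namely
\[
E[N(t)] = \sum_{m=1}^{\infty} \int_{0}^{t/m} e^{-\lambda m r} \bigl(1 + \lambda(t-mr)\bigr) f(r)\,dr,
\]
and to show that the tail of this sum diverges. Since every term in the sum is non-negative, divergence of any tail forces $E[N(t)] = \infty$.

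The key observation is that once $m$ is large enough that $t/m < \delta$, the entire interval of integration lies inside $(0,\delta)$, where the hypothesis supplies the lower bound $f(r) > \epsilon$. Concretely, let $M = \lceil t/\delta\rceil + 1$, so that $t/m < \delta$ for every $m \geq M$. For $r \in [0, t/m]$ one has $mr \leq t$, hence $e^{-\lambda m r} \geq e^{-\lambda t}$ and $1 + \lambda(t - mr) \geq 1$. Combining these bounds yields
\[
\int_{0}^{t/m} e^{-\lambda m r}\bigl(1 + \lambda(t-mr)\bigr) f(r)\,dr \;\geq\; \epsilon\, e^{-\lambda t} \int_{0}^{t/m} dr \;=\; \epsilon\, e^{-\lambda t}\,\frac{t}{m}
\]
for each $m \geq M$.

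Summing over $m \geq M$ gives
\[
E[N(t)] \;\geq\; \sum_{m=M}^{\infty} \epsilon\, e^{-\lambda t}\,\frac{t}{m} \;=\; \epsilon\, e^{-\lambda t}\, t \sum_{m=M}^{\infty} \frac{1}{m} \;=\; \infty,
\]
since the harmonic tail diverges. There is no real obstacle to overcome here: the only thing to be careful about is matching the lower cut-off $M$ to the hypothesis so that the whole integration window $[0,t/m]$ lies in the region where the pointwise lower bound $f>\epsilon$ is available. The decay factor $e^{-\lambda m r}$, which might seem threatening, is harmless because on the shrinking interval $[0,t/m]$ we have $mr \leq t$, so the exponential is bounded below by the $m$-independent constant $e^{-\lambda t}$.
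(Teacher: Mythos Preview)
Your argument is correct and follows essentially the same route as the paper: for $m$ large enough that $t/m<\delta$, bound the $m$th term below by a constant times $1/m$ and invoke divergence of the harmonic series. The only cosmetic difference is that the paper evaluates $\int_0^{t/m} e^{-\lambda mr}\bigl(1+\lambda(t-mr)\bigr)\,dr$ exactly (it equals $t/m$), whereas you use the cruder but perfectly adequate pointwise bounds $e^{-\lambda mr}\geq e^{-\lambda t}$ and $1+\lambda(t-mr)\geq 1$ to obtain $\epsilon\,e^{-\lambda t}\,t/m$.
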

\begin{proof}
    For $k$ sufficiently large, $t/k < \delta$, and hence
    \begin{equation*}
    \int_0^{t/k} e^{- \lambda kr}\left(1 + \lambda (t -kr) \right) f(r) \, \,dr > \epsilon \int_0^{t/k} e^{- \lambda kr}\left(1 + \lambda (t -kr) \right) \, \,dr = \epsilon \, t/k.
    \end{equation*}
    It follows from Theorem \ref{thm:EENtR} that $E[N(t)] = \infty$.
\end{proof}

\begin{corollary}[of Theorem~\ref{thm:EENtR}]\label{cor:EENtR2}
 If for some $\epsilon > 0$ and $\delta > 0$, $f(r) < r^\epsilon$ for $0<r<\delta$, then $E[N(t)] < \infty$.
\end{corollary}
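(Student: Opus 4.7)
The plan is to bound each summand in the formula from Theorem~\ref{thm:EENtR} and show that the resulting series converges. First, on the integration range $[0,t/m]$, the factor $1+\lambda(t-mr)$ is bounded above by $1+\lambda t$, so it suffices to control
\[
I_m \;=\; \int_0^{t/m} e^{-\lambda m r}\,f(r)\,dr.
\]
I would split each such integral at $r=\delta$ to separate the region where the hypothesis $f(r)<r^\epsilon$ applies from the remainder. If $t/m\le\delta$, only the first piece is present; otherwise, write $I_m = \int_0^\delta + \int_\delta^{t/m}$.

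For the piece on $(0,\delta)$, apply $f(r)<r^\epsilon$ and perform the substitution $u=\lambda m r$ to obtain
\[
\int_0^\delta r^\epsilon e^{-\lambda m r}\,dr \;\le\; \frac{1}{(\lambda m)^{\epsilon+1}}\int_0^\infty u^\epsilon e^{-u}\,du \;=\; \frac{\Gamma(\epsilon+1)}{(\lambda m)^{\epsilon+1}},
\]
which is summable in $m$ because $\epsilon+1>1$. For the piece on $(\delta,t/m)$ (when nonempty), use the crude bound $e^{-\lambda m r}\le e^{-\lambda m \delta}$ and $\int f\le 1$ to get a bound of $e^{-\lambda m \delta}$, which is geometrically summable in $m$.

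Combining these bounds yields
\[
E[N(t)] \;\le\; (1+\lambda t)\sum_{m=1}^{\infty}\!\left(\frac{\Gamma(\epsilon+1)}{(\lambda m)^{\epsilon+1}} + e^{-\lambda m \delta}\right) \;<\; \infty,
\]
which finishes the proof. The only mild obstacle is handling the small-$m$ terms where $t/m$ may exceed $\delta$ and the hypothesis on $f$ no longer controls the whole interval; the splitting at $\delta$ together with the exponential factor $e^{-\lambda m r}$ resolves this cleanly, so I do not expect any serious technical difficulty.
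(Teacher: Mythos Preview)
Your proposal is correct and follows essentially the same route as the paper: bound $1+\lambda(t-mr)\le 1+\lambda t$, use $f(r)<r^\epsilon$ near $0$, and substitute $u=\lambda m r$ to obtain a summable $1/m^{1+\epsilon}$ bound. The only cosmetic difference is that the paper simply restricts attention to $m$ large enough that $t/m<\delta$ (leaving the finitely many remaining terms as obviously finite), whereas you split every integral at $\delta$ and control the tail piece by the geometric bound $e^{-\lambda m\delta}$; both treatments are equivalent in spirit and difficulty.
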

\begin{proof}
    For $k$ sufficiently large, $t/k < \delta$, and hence
    \begin{eqnarray*}
    &   & \int_0^{t/k} e^{- \lambda kr}\left(1 + \lambda (t -kr) \right) f(r) \, dr \\
    & < & (1 + \lam t) \int_0^{t/k} \emlx[kr] f(r) \, dr \\
    & < & (1 + \lam t) \int_0^{t/k} \emlx[kr] r^\epsilon \, dr \\
    & = & \frac{1 + \lam t}{(\lam k)^{1+\epsilon}} \int_0^{\lam t} e^{-u} u^\epsilon \, du.
    \end{eqnarray*}
    It follows from Theorem \ref{thm:EENtR} that $E[N(t)] < \infty$.
\end{proof}

We next consider $E[N(t)]$ in (\ref{eqEENtR:soln}) for some particular families of distributions of $R$. We restrict ourselves to two families of distributions of $R$ for which closed-form solutions exist: the exponential distribution with hazard rate $\nu > 0$ and location shift $\eta \geq 0$, and the uniform distribution on $(a,b)$ for $a \geq 0$.

For the exponential replacement case, the integrand in (\ref{eqEENtR:soln}) is, for $r > \eta$,
\[
\e^{-k\lam r}\left(\lam\left(t-kr\right)+1\right)\nu\e^{-\nu\left(r-\eta\right)}, \nonumber
\]

\noindent whose indefinite integral is
\[
\dfrac{\e^{-k\lam r}\nu\left(\lam\left(\nu+k\lam\right)\left(kr-t\right)-\nu\right)\e^{-\nu(r-\eta)}}{\left(\nu+k\lam\right)^2}. \nonumber
\]

For the uniform replacement case, for $r \in (a,b)$, the integrand is
\[
\dfrac{\e^{-k\lam r}\left(\lam\left(t-kr\right)+1\right)}{b-a}, \nonumber
\]
\noindent whose indefinite integral is
\[
\dfrac{\e^{-k\lam r}\left(kr-t\right)}{\left(b-a\right)k}.
\]

From Corollary~\ref{cor:EENtR1}, we see that for a replacement distribution that is either exponential with $\eta=0$, or uniform with $a=0$, that $E[N(t)]=\infty$.

To illustrate the effects of a random replacement time, we consider $\lam=1/10$ and five uniform distributions for $R$. We let $a=6^\text{--}$ and $b=(6^\text{+},7,7.5,11,40)$. See Figure~\ref{fig:ENttUab}, and note that $R\sim U(6^\text{--},6^\text{+})$ is equivalent to the RaFT process shown in Figure~\ref{fig:ENtt}(b). The open symbols are the values of $\ENt/t$ at $t=(6,12,\ldots,36)$; the solid symbols correspond to the $b$-value increments; for example, for $b=7$, they appear at $t=(7,14,21,28,35)$. The values from 12 to 14, for example, indicate $t$ regions where two consecutive replacements are possible.

\begin{figure}[H] 
\centering
  \scalebox{0.4}{\includegraphics{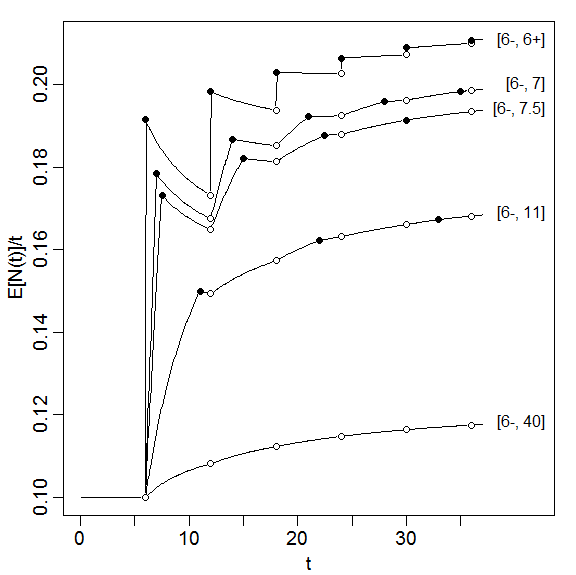}}
  \caption{$\ENt/t$ vs. $t$ for $\lam=1/10$, for each of five $U(6^\text{--},b)$ $R$ distributions.}
  \label{fig:ENttUab}
\end{figure}

{\bf Acknowledgement} The authors would like to acknowledge Mr. Robert Moses, a Reliability Engineer at General Motors, for motivating this research.

\end{document}